\newtheorem{theorem}{Theorem}[section]
\newtheorem{lemma}[theorem]{Lemma}
\newtheorem{conjecture}[theorem]{Conjecture}
\newtheorem{claim}{Claim}
\theoremstyle{definition}
\theoremstyle{remark}
\newcommand{\suchthat}{\;\ifnum\currentgrouptype=16 \middle\fi|\;}
\DeclareMathOperator{\rank}{rank}
\colorlet{colbg}{white}
\colorlet{colfg}{black}
\colorlet{colgraphv}{colfg!75!white}
\colorlet{colgraphe}{colfg!55!white}
\colorlet{colG}{DarkSeaGreen}
\definecolor{colR}{HTML}{CC6677}
\definecolor{colO}{HTML}{DDCC77}
\definecolor{colB}{HTML}{6699CC}
\colorlet{colY}{Gold!90!black}
\tikzstyle{vertex}=[fill=colgraphv,circle,inner sep=0pt, minimum size=4pt]
\tikzstyle{rvertex}=[fill=red,circle,inner sep=0pt, minimum size=4pt]
\tikzstyle{edge}=[line width=1.5pt,colgraphe]
\tikzstyle{redge}=[line width=1.5pt,red]
\tikzstyle{dedge}=[edge, -{LaTeX[round,length=8pt]}]
\definecolor{myorange}{RGB}{239,128,108}
\definecolor{myblue}{RGB}{154,210,201}
\definecolor{mypurple}{RGB}{117,0,235}
\tikzstyle{oedge}=[line width=1.5pt,myorange]
\tikzstyle{bedge}=[line width=1.5pt,myblue]
\tikzstyle{pedge}=[line width=1.5pt,mypurple]
\tikzstyle{labelsty}=[font=\scriptsize]
\title{A counter-example to Baranyai's combinatorial characterisation for 3-rigidity}
\author{Sean Dewar\thanks{Department of Computer Science, KU Leuven. E-mail: \texttt{sean.dewar@kuleuven.be}}
}
\begin{document}
\date{}
\maketitle

\begin{abstract}
    In \cite{baranyai2026genericrigiditygraphs},
    Baranyai described a necessary combinatorial characterisation of graph rigidity for dimension 3.
    In this short note we provide a counter-example to the converse of the condition.
    Additionally, we provide an alternative proof to the Baranyai's necessary condition.    
\end{abstract}

\section{Introduction}

A long standing problem in the field of rigidity theory is to find a combinatorial characterisation of 3-dimensional graph rigidity.
In \cite{baranyai2026genericrigiditygraphs}, Baranyai provided a new necessary condition for rigidity:

\begin{theorem}[Baranyai \cite{baranyai2026genericrigiditygraphs}]\label{mainthm}
    Let $G=(V,E)$ be a minimally 3-rigid graph with at least 4 vertices. Then for any edge $e  \in E$, there exists a partition $(S_1,S_2,S_3)$ of $E$ such that the following holds:
    \begin{enumerate}
        \item $|S_i|=|V|-i$ for each $i \in \{1,2,3\}$.
        \item $e \in S_1$.
        \item Each of the graphs
        \begin{equation*}
            (V, S_1 \cup S_2) , \qquad (V, S_1 \cup S_3) /e, \qquad (V, S_2 \cup S_3 \cup e)/e
        \end{equation*}
        is minimally 2-rigid.
    \end{enumerate}
\end{theorem}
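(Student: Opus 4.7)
The plan is to recast the theorem as a matroid-partition statement and verify the Edmonds condition.

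Set $M_1 := R_2(G)$ and $M_2 := R_2(G/e)$, with respective ranks $2|V|-3$ and $2|V|-5$, both consequences of the 3-rigidity of $G$ via Whiteley's coning/projection arguments (namely, a minimally 3-rigid graph on at least 4 vertices is 2-rigid, and the contraction of any edge is 2-rigid). Writing $S_1' := S_1 \setminus \{e\}$, the three conditions of the theorem translate, by complementing bases inside each rigidity matroid, into the equivalent statement that $S_3$ is a basis of $M_1^*$ avoiding $e$, while $S_1'$ and $S_2$ are two disjoint bases of $M_2^*$; here $(S_1', S_2, S_3)$ partitions $E \setminus \{e\}$ with sizes $(|V|-2, |V|-2, |V|-3)$, which sum to $3|V|-7 = |E \setminus \{e\}|$.

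Existence of such a partition is equivalent to $E \setminus \{e\}$ being independent in the matroid union $(M_1^* \setminus e) \oplus M_2^* \oplus M_2^*$. By Edmonds' matroid partition theorem combined with the duality formula for rank, this reduces to the single rank inequality
\begin{equation*}
    r_{M_1}(B \cup \{e\}) + 2\, r_{M_2}(B) \,\ge\, 2|B| + 1 \qquad \text{for every } B \subseteq E \setminus \{e\}.
\end{equation*}

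\textbf{Main obstacle.} The crux is to prove this rank inequality, which is where the 3-rigidity of $G$ enters crucially: mere 2-rigidity of $G$ and $G/e$ is not enough, and indeed the counter-example of this note shows that purely combinatorial conditions of similar flavour can fail to characterise 3-rigidity. The strategy is to extract the inequality from the structure of the 3D rigidity matrix $R = R(G,p)$ of a generic realisation $p$, using its decomposition into coordinate blocks. Choosing coordinates so that $p_u - p_v$ is aligned with the $z$-axis splits $R$ as $[R_{XY} \mid R_Z]$: the projections onto the $xz$- and $yz$-column blocks realise generic 2D frameworks of $G$ and thus carry the matroid $M_1$, while $R_{XY}$ encodes a degenerate 2D framework tied to $M_2$ by the contraction of $e$. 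Restricting to the rows indexed by $B \cup \{e\}$ and combining $\rank R = 3|V|-6$ with the rank contributions of these three projections should yield the required inequality. Making this linear-algebra reduction precise — that is, showing how the 3-rigidity of $G$ forces $r_{M_1}(B \cup \{e\})$ and $r_{M_2}(B)$ to be large simultaneously — is likely the most intricate step of the proof.
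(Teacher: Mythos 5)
Your matroid-theoretic reformulation is sound as far as it goes: the translation of the three conditions into ``$S_3$ a cobasis of $M_1$ avoiding $e$; $S_1\setminus\{e\}$ and $S_2$ disjoint cobases of $M_2$'' is correct, and the algebra reducing existence of the partition (via Edmonds' matroid-union theorem and matroid duality) to the single inequality
\begin{equation*}
    r_{M_1}(B\cup\{e\}) + 2\,r_{M_2}(B) \;\ge\; 2|B|+1 \qquad\text{for all } B\subseteq E\setminus\{e\}
\end{equation*}
checks out. However, the proposal stops precisely at the point where the theorem's content actually lives. You explicitly flag proving this inequality as ``likely the most intricate step,'' and you offer only a heuristic sketch of how the 3D rigidity matrix might force it. That sketch is itself shaky: aligning $p(x)-p(y)$ with the $z$-axis makes the $xz$- and $yz$-projections of $(G,p)$ non-generic (the edge $e$ is collapsed to a single axis direction), so the claim that those column-blocks ``realise generic 2D frameworks of $G$ and thus carry the matroid $M_1$'' does not hold as stated; you would need to argue more carefully which projection carries which matroid, and it is not obvious that the resulting rank bookkeeping delivers the Edmonds inequality for an arbitrary subset $B$.

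By contrast, the paper sidesteps matroid union entirely. It fixes a generic realisation, normalises so that $p(x)-p(y)$ lies along the \emph{first} coordinate axis and $p(z)=0$, deletes six columns to obtain an invertible $(3|V|-6)\times(3|V|-6)$ submatrix $X$ of the rigidity matrix, and then applies a generalised Laplace expansion (\Cref{lem:laplaceexpansion}) to $X$ to split the rows into the desired classes. This is done in two stages: \Cref{lem2} selects the spanning tree $F=S_1$ so that $(V,(E\setminus F)\cup e)/e$ is minimally $2$-rigid, and \Cref{lem1} then splits $E\setminus F$ into $R_1=S_2$ and $R_2=S_3$. The linear algebra directly produces invertible submatrices that are recognised, after small column operations, as $2$-dimensional rigidity matrices of $(V,S_1\cup S_2)$ and of the two contracted graphs. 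In short: the paper proves the existence of the partition constructively for one specific basis choice, whereas your proposal correctly identifies the matroid-union reformulation but leaves its crux --- the rank inequality for \emph{every} $B$ --- unproven, which is where all the work is. To complete your route you would essentially have to re-derive a Laplace-expansion-type or rigidity-matrix rank argument anyway, so the reformulation does not obviously buy a shorter proof, though it would be a nice conceptual repackaging if the inequality could be established cleanly.

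A couple of smaller points worth noting. First, you assert that $M_1$ and $M_2$ have ranks $2|V|-3$ and $2|V|-5$ respectively; these facts (that a minimally $3$-rigid graph is $2$-rigid and that contracting any edge of it yields a $2$-rigid graph) are true but are not quite the statement of Whiteley's coning theorem, which relates dimensions $d$ and $d+1$ for the cone, not $G$ itself --- in fact they follow as byproducts of the paper's \Cref{lem1} and \Cref{lem2}, so invoking them as prerequisites is slightly circular unless you supply an independent reference or argument. Second, the equivalence between the theorem's condition (iii) and the cobasis statement tacitly requires that contracting $e$ creates no parallel edges inside $S_1\cup S_3$ or $S_2\cup S_3\cup e$; this is forced by the cardinality constraints, but it deserves a sentence.

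Bottom line: the reformulation via Edmonds is correct and potentially illuminating, but the proposal contains a genuine gap --- the central rank inequality is stated, not proved --- and the linear-algebra sketch offered to close it has an error in the choice of coordinate alignment and does not reach the inequality.
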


Baranyai additionally claimed a proof for the converse of \Cref{mainthm} (see \cite[Thm 1]{baranyai2026genericrigiditygraphs}).
In this short note we provide a counter-example to this claim.
We additionally provide an alternative proof of \Cref{mainthm} that solely uses arguments and techniques from linear algebra.

\section{Background on rigidity}

We first begin with a brief covering of the core concepts of rigidity theory.
We refer an interested reader to \cite{GraverServatius} for more details.

A \emph{($d$-dimensional) framework} is a pair $(G,p)$ where $G=(V,E)$ is a (simple finite) graph and $p: V \rightarrow \mathbb{R}^d$ is a map with $p(v) = (p_i(v))_{i \in [d]}$ for each $v \in V$.
The map $p$ is also said to be a \emph{($d$-dimensional) realisation} of $G$.
We say that a framework $(G,p)$ is \emph{rigid} if there exists $\varepsilon >0$ such that any other $d$-dimensional framework $(G,q)$ satisfying the equations
\begin{align*}
    \|q(v)-q(w)\|&=\|p(v)-p(w)\| &\text{ for all } vw \in E, \\
    \|p(v)-q(v)\|&< \varepsilon &\text{ for all } v \in V
\end{align*}
also satisfies $\|q(v)-q(w)\|=\|p(v)-p(w)\|$ for each pair of vertices $v,w \in V$.

As rigidity is difficult to check \cite{Abbot},
we opt to linearise the problem as follows.
The \emph{rigidity matrix} $R(G,p)$ of a $d$-dimensional framework $(G,p)$ is now the $|E| \times d|V|$ matrix with rows indexed by edges $e \in E$ and columns indexed by pairs $(v,i) \in V \times [d]$ such that 
\begin{equation*}
    R(G,p)_{e,(v,i)}
    := 
    \begin{cases}
        p_i(v) - p_i(w) &\text{if } e = vw,\\
        0 &\text{otherwise}.
    \end{cases}
\end{equation*}
So long as the graph $G$ has at least $d$ vertices,
we say that $(G,p)$ is \emph{infinitesimally rigid} if $\rank R(G,p) = d|V|-\binom{d+1}{2}$;
if a graph has less than $d$ vertices then we say that it is infinitesimally rigid if $G$ is a complete graph and the points $p(v)$, $v \in V$, are affinely independent.
With this, any infinitesimally rigid framework is rigid \cite{AsimowRothI,AsimowRothII},
although the converse is not true.

Infinitesimal rigidity plays especially well with \emph{generic} realisations: any realisation $p$ where the coordinate multiset $\{p_i(v) :v \in V, \ i \in [d]\}$ forms an algebraically independent set over the rational numbers $\mathbb{Q}$.
It was shown by Asimow and Roth \cite{AsimowRothI,AsimowRothII} that either every generic $d$-dimensional realisation of a graph is infinitesimally rigid, or every $d$-dimensional realisation (including non-generic ones) of $G$ is not infinitesimally rigid.
This inspires the following definition:
and we say that the graph $G$ is \emph{$d$-rigid} if there exists an infinitesimally rigid $d$-dimensional framework $(G,p)$.
We additionally say that $G$ is \emph{minimally $d$-rigid} if it is $d$-rigid but fails to be $d$-rigid if we delete any edge.

Combinatorial characterisations for minimal $d$-rigidity exist for small values of $d$.
The case of $d=1$ is simple: a graph is minimally 1-rigid if and only if it is a tree.
For the case of $d=2$, we introduce the following terminology.
Given positive integers $k,\ell$, we say a graph $G=(V,E)$ is \emph{$(k,\ell)$-sparse} if every subgraph $G'=(V',E')$ with $|V'| \geq k$ satisfies the inequality $|E'|\leq k|V'|-\ell$,
and \emph{$(k,\ell)$-tight} if $G$ is $(k,\ell)$-sparse and satisfies the equality $|E|=k|V|-\ell$.
With relatively little effort, one can use the rigidity matrix to show that every minimally $d$-rigid graph is $(d,\binom{d+1}{2})$-tight.
The converse of this is also true for $d=2$ case.

\begin{theorem}[Geiringer-Laman theorem \cite{pollaczekgeiringer1927,Laman1970}]
    A graph is minimally 2-rigid if and only if it is $(2,3)$-tight, or it is a single vertex.
\end{theorem}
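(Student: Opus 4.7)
The forward direction, that minimal 2-rigidity implies $(2,3)$-tightness for graphs with at least two vertices, is a rank calculation foreshadowed in the text. At any realisation $p$ whose image affinely spans $\RR^2$, the kernel of $R(G,p)$ contains the 3-dimensional space of infinitesimal rigid motions of the plane, so $\rank R(G,p) \leq 2|V|-3$. Applying the same bound to every induced subframework gives $(2,3)$-sparsity, and rank equality at a generic realisation together with edge-minimality forces $|E|=2|V|-3$.

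For the converse I would argue by induction on $|V|$ via Henneberg moves, with base cases the single vertex (trivially minimally 2-rigid) and $K_2$. Let $G$ be $(2,3)$-tight with $|V|\geq 3$. From $\sum_v \deg(v)=2|E|=4|V|-6$ combined with sparsity excluding degrees $0$ and $1$, there exists a vertex $v$ of degree $2$ or $3$. If $\deg(v)=2$ with neighbours $a,b$, then $G-v$ is $(2,3)$-tight and minimally 2-rigid by induction, and any realisation of $G$ with $p(v)$ off the line through $p(a),p(b)$ adds two linearly independent rows to the rigidity matrix. If $\deg(v)=3$ with neighbours $a,b,c$, a double count of sparsity defects produces a non-edge among $ab,ac,bc$, say $xy$, such that $G'=(G-v)+xy$ is still $(2,3)$-tight. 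By induction $G'$ is minimally 2-rigid; realising $G$ with $p(v)$ placed on the line through $p(x),p(y)$, an explicit row combination of rows $vx$ and $vy$ in $R(G,p)$ reproduces the $xy$ row of $R(G',p|_{V-v})$, so $\rank R(G,p) = \rank R(G',p|_{V-v})+2 = 2|V|-3$.

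The combinatorial obstacle is the non-edge selection lemma: one must exclude the possibility that all three augmentations $G-v+ab$, $G-v+ac$, $G-v+bc$ violate $(2,3)$-sparsity, which follows from a short inclusion-exclusion on the three violating subgraphs. The geometric obstacle is justifying the non-generic collinear placement of $p(v)$; the row-combination argument above shows that the rows $vx$ and $vy$ together carry exactly the rank that the edge $xy$ contributes in $G'$, so no rank is lost relative to the generic case. These two ingredients, combined with the straightforward 0-extension step, are the crux of the argument.
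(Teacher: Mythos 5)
The paper does not prove the Geiringer--Laman theorem; it is cited as a classical result (Pollaczek-Geiringer 1927, Laman 1970), so there is no in-paper proof to compare against. What you have written is the standard Henneberg-construction proof, and it is essentially correct.

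A few details that should be made precise if this sketch were fleshed out. In the 1-extension step, placing $p(v)$ on the line through $p(x)$ and $p(y)$ gives a combination of rows $vx$ and $vy$ that vanishes in the columns indexed by $v$ and equals a \emph{nonzero scalar multiple} of the $xy$-row of $R(G',p|_{V\setminus v})$, padded with zeros in the $v$-columns; it is not literally equal to that row. The rank count should then be phrased carefully: the row space of $R(G,p)$ contains the zero-padded row space of $R(G',p|_{V\setminus v})$ (of dimension $2(|V|-1)-3$ by induction and genericity of $p|_{V\setminus v}$), together with the rows $vx$ and $vc$, whose $v$-column components $p(v)-p(x)$ and $p(v)-p(c)$ are linearly independent since $c$ is not on the $xy$-line; hence $\rank R(G,p)\geq 2(|V|-1)-3+2=2|V|-3$. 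Since $|E|=2|V|-3$ the reverse inequality is automatic, and lower semicontinuity of rank (or, equivalently, the fact that rank is maximised at generic points) lets you transfer the conclusion from this special collinear realisation to the generic one — this step should be stated explicitly, as you obtained rank equality at a \emph{non-generic} $p$. Minimality then follows because the $(2|V|-3)\times 2|V|$ rigidity matrix has full row rank, so deleting any row drops the rank. Finally, in the non-edge selection lemma the inclusion-exclusion must distinguish whether the three ``violator'' tight sets $V_{ab},V_{ac},V_{bc}$ intersect pairwise in $\geq 2$ vertices (in which case two of them union to a tight set containing $a,b,c$ and adding $v$ with its three edges yields $2|U|-3+3>2(|U|+1)-3$, a sparsity violation) or in singletons $\{a\},\{b\},\{c\}$ with empty triple intersection (in which case the same bound $e(U)\geq 2|U|-3$ holds by inclusion-exclusion, and the contradiction is the same). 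Your phrasing compresses this, but the structure you describe is the correct one.
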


It is tempting to believe that every $(d,\binom{d+1}{2})$-tight must by minimally $d$-rigid, but this is unfortunately false (see the infamous `double banana' in the next section for a classic counter-example for $d=3$).
To this day, no known combinatorial characterisation for minimal $d$-rigidity exists for any $d \geq 3$.

\section{Counter-example to the converse of \texorpdfstring{\Cref{mainthm}}{Theorem 1.1}}\label{sec:counter}

Let $G=(V,E)$ be the double banana graph pictured below:
\begin{center}
\begin{tikzpicture}[scale=2]          
	\node[vertex] (r1) at (0,0.5) {};
	\node[vertex] (r2) at (0,-0.5) {};
	\node[vertex] (a1) at (-0.8,0.3) {};
	\node[vertex] (a2) at (0.8,0.3) {};
	\node[vertex] (b1) at (-0.2,0) {};
	\node[vertex] (b2) at (0.2,0) {};
	\node[vertex] (c1) at (-0.8,-0.3) {};
	\node[vertex] (c2) at (0.8,-0.3) {};
	\draw[edge] (r1)edge(a1) (r1)edge(b1) (r1)edge(c1) (r1)edge(a2) (r1)edge(b2) (r1)edge(c2);
	\draw[edge] (r2)edge(a1) (r2)edge(b1) (r2)edge(c1) (r2)edge(a2) (r2)edge(b2) (r2)edge(c2);
	\draw[edge] (a1)edge(b1) (a1)edge(c1) (b1)edge(c1);
	\draw[edge] (a2)edge(b2) (a2)edge(c2) (b2)edge(c2);
\end{tikzpicture}
\end{center}
We are now going to prove that $G$ satisfies the necessary conditions for minimal 3-rigidity outlined in \Cref{mainthm} by showing that each edge allows for a suitable choice of $S_1,S_2,S_3$.

We first choose $e$ to be the edge highlighted in red:
\begin{center}
\begin{tikzpicture}[scale=2]          
	\node[vertex] (r1) at (0,0.5) {};
	\node[vertex] (r2) at (0,-0.5) {};
	\node[vertex] (a1) at (-0.8,0.3) {};
	\node[vertex] (a2) at (0.8,0.3) {};
	\node[vertex] (b1) at (-0.2,0) {};
	\node[vertex] (b2) at (0.2,0) {};
	\node[vertex] (c1) at (-0.8,-0.3) {};
	\node[vertex] (c2) at (0.8,-0.3) {};
	\draw[edge] (r1)edge(a1) (r1)edge(c1) (r1)edge(a2) (r1)edge(b2) (r1)edge(c2);
	\draw[edge] (r2)edge(a1) (r2)edge(b1) (r2)edge(c1) (r2)edge(a2) (r2)edge(b2) (r2)edge(c2);
	\draw[edge] (a1)edge(b1) (a1)edge(c1) (b1)edge(c1);
	\draw[edge] (a2)edge(b2) (a2)edge(c2) (b2)edge(c2);
    \draw[redge] (r1)edge(b1);
\end{tikzpicture}
\end{center}
Here we choose the following decomposition into $S_1$ (orange), $S_2$ (blue) and $S_3$ (purple):
\begin{center}
\begin{tikzpicture}[scale=2]          
	\node[vertex] (r1) at (0,0.5) {};
	\node[vertex] (r2) at (0,-0.5) {};
	\node[vertex] (a1) at (-0.8,0.3) {};
	\node[vertex] (a2) at (0.8,0.3) {};
	\node[vertex] (b1) at (-0.2,0) {};
	\node[vertex] (b2) at (0.2,0) {};
	\node[vertex] (c1) at (-0.8,-0.3) {};
	\node[vertex] (c2) at (0.8,-0.3) {};
	\draw[oedge] (r1)edge(a1);
    \draw[oedge] (r1)edge(b1);
    \draw[oedge] (r1)edge(c1);
    \draw[oedge] (r1)edge(a2);
    \draw[bedge] (r1)edge(b2);
    \draw[oedge] (r1)edge(c2);
	\draw[bedge] (r2)edge(a1);
    \draw[oedge] (r2)edge(b1);
    \draw[pedge] (r2)edge(c1);
    \draw[pedge] (r2)edge(a2);
    \draw[oedge] (r2)edge(b2);
    \draw[pedge] (r2)edge(c2);
	\draw[bedge] (a1)edge(b1);
    \draw[pedge] (a1)edge(c1);
    \draw[bedge] (b1)edge(c1);
	\draw[bedge] (a2)edge(b2);
    \draw[bedge] (a2)edge(c2);
    \draw[pedge] (b2)edge(c2);
\end{tikzpicture}
\end{center}
This gives, from left to right, the graphs $(V, S_1 \cup S_2) , (V, S_1 \cup S_3) /e, (V, S_2 \cup S_3 \cup e)/e$ as follows, with the vertex formed by contracting $e$ highlighted in red:
\begin{center}
\begin{tikzpicture}[scale=2]          
	\node[vertex] (r1) at (0,0.5) {};
	\node[vertex] (r2) at (0,-0.5) {};
	\node[vertex] (a1) at (-0.8,0.3) {};
	\node[vertex] (a2) at (0.8,0.3) {};
	\node[vertex] (b1) at (-0.2,0) {};
	\node[vertex] (b2) at (0.2,0) {};
	\node[vertex] (c1) at (-0.8,-0.3) {};
	\node[vertex] (c2) at (0.8,-0.3) {};
	\draw[oedge] (r1)edge(a1);
    \draw[oedge] (r1)edge(b1);
    \draw[oedge] (r1)edge(c1);
    \draw[oedge] (r1)edge(a2);
    \draw[bedge] (r1)edge(b2);
    \draw[oedge] (r1)edge(c2);
	\draw[bedge] (r2)edge(a1);
    \draw[oedge] (r2)edge(b1);
    \draw[oedge] (r2)edge(b2);
	\draw[bedge] (a1)edge(b1);
    \draw[bedge] (b1)edge(c1);
	\draw[bedge] (a2)edge(b2);
    \draw[bedge] (a2)edge(c2);
\end{tikzpicture}\qquad\qquad
\begin{tikzpicture}[scale=2]          
	\node[rvertex] (r1) at (-0.2,0) {};
	\node[vertex] (r2) at (0,-0.5) {};
	\node[vertex] (a1) at (-0.8,0.3) {};
	\node[vertex] (a2) at (0.8,0.3) {};
	\node[rvertex] (b1) at (-0.2,0) {};
	\node[vertex] (b2) at (0.2,0) {};
	\node[vertex] (c1) at (-0.8,-0.3) {};
	\node[vertex] (c2) at (0.8,-0.3) {};
	\draw[oedge] (r1)edge(a1);
    \draw[oedge] (r1)edge(c1);
    \draw[oedge] (r1)edge(a2);
    \draw[oedge] (r1)edge(c2);
    \draw[oedge] (r2)edge(b1);
    \draw[pedge] (r2)edge(c1);
    \draw[pedge] (r2)edge(a2);
    \draw[oedge] (r2)edge(b2);
    \draw[pedge] (r2)edge(c2);
    \draw[pedge] (a1)edge(c1);
    \draw[pedge] (b2)edge(c2);
\end{tikzpicture}\qquad\qquad
\begin{tikzpicture}[scale=2]          
	\node[rvertex] (r1) at (-0.2,0) {};
	\node[vertex] (r2) at (0,-0.5) {};
	\node[vertex] (a1) at (-0.8,0.3) {};
	\node[vertex] (a2) at (0.8,0.3) {};
	\node[rvertex] (b1) at (-0.2,0) {};
	\node[vertex] (b2) at (0.2,0) {};
	\node[vertex] (c1) at (-0.8,-0.3) {};
	\node[vertex] (c2) at (0.8,-0.3) {};
    \draw[bedge] (r1)edge(b2);
	\draw[bedge] (r2)edge(a1);
    \draw[pedge] (r2)edge(c1);
    \draw[pedge] (r2)edge(a2);
    \draw[pedge] (r2)edge(c2);
	\draw[bedge] (a1)edge(b1);
    \draw[pedge] (a1)edge(c1);
    \draw[bedge] (b1)edge(c1);
	\draw[bedge] (a2)edge(b2);
    \draw[bedge] (a2)edge(c2);
    \draw[pedge] (b2)edge(c2);
\end{tikzpicture}
\end{center}
We note here that each of these graphs are $(2,3)$-tight, and hence minimally 2-rigid.

Now choose $e$ to be the edge highlighted in red:
\begin{center}
\begin{tikzpicture}[scale=2]          
	\node[vertex] (r1) at (0,0.5) {};
	\node[vertex] (r2) at (0,-0.5) {};
	\node[vertex] (a1) at (-0.8,0.3) {};
	\node[vertex] (a2) at (0.8,0.3) {};
	\node[vertex] (b1) at (-0.2,0) {};
	\node[vertex] (b2) at (0.2,0) {};
	\node[vertex] (c1) at (-0.8,-0.3) {};
	\node[vertex] (c2) at (0.8,-0.3) {};
	\draw[edge] (r1)edge(a1);
    \draw[edge] (r1)edge(b1);
    \draw[edge] (r1)edge(c1);
    \draw[edge] (r1)edge(a2);
    \draw[edge] (r1)edge(b2);
    \draw[edge] (r1)edge(c2);
	\draw[edge] (r2)edge(a1);
    \draw[edge] (r2)edge(b1);
    \draw[edge] (r2)edge(c1);
    \draw[edge] (r2)edge(a2);
    \draw[edge] (r2)edge(b2);
    \draw[edge] (r2)edge(c2);
	\draw[redge] (a1)edge(b1);
    \draw[edge] (a1)edge(c1);
    \draw[edge] (b1)edge(c1);
	\draw[edge] (a2)edge(b2);
    \draw[edge] (a2)edge(c2);
    \draw[edge] (b2)edge(c2);
\end{tikzpicture}
\end{center}
Here we choose the following decomposition into $S_1$ (orange), $S_2$ (blue) and $S_3$ (purple):
\begin{center}
\begin{tikzpicture}[scale=2]          
	\node[vertex] (r1) at (0,0.5) {};
	\node[vertex] (r2) at (0,-0.5) {};
	\node[vertex] (a1) at (-0.8,0.3) {};
	\node[vertex] (a2) at (0.8,0.3) {};
	\node[vertex] (b1) at (-0.2,0) {};
	\node[vertex] (b2) at (0.2,0) {};
	\node[vertex] (c1) at (-0.8,-0.3) {};
	\node[vertex] (c2) at (0.8,-0.3) {};
	\draw[bedge] (r1)edge(a1);
    \draw[oedge] (r1)edge(b1);
    \draw[pedge] (r1)edge(c1);
    \draw[oedge] (r1)edge(a2);
    \draw[bedge] (r1)edge(b2);
    \draw[bedge] (r1)edge(c2);
	\draw[bedge] (r2)edge(a1);
    \draw[oedge] (r2)edge(b1);
    \draw[pedge] (r2)edge(c1);
    \draw[pedge] (r2)edge(a2);
    \draw[oedge] (r2)edge(b2);
    \draw[pedge] (r2)edge(c2);
	\draw[oedge] (a1)edge(b1);
    \draw[bedge] (a1)edge(c1);
    \draw[oedge] (b1)edge(c1);
	\draw[bedge] (a2)edge(b2);
    \draw[oedge] (a2)edge(c2);
    \draw[pedge] (b2)edge(c2);
\end{tikzpicture}
\end{center}
This gives, from left to right, the graphs $(V, S_1 \cup S_2) , (V, S_1 \cup S_3) /e, (V, S_2 \cup S_3 \cup e)/e$ as follows, with the vertex formed by contracting $e$ highlighted in red:
\begin{center}
\begin{tikzpicture}[scale=2]          
	\node[vertex] (r1) at (0,0.5) {};
	\node[vertex] (r2) at (0,-0.5) {};
	\node[vertex] (a1) at (-0.8,0.3) {};
	\node[vertex] (a2) at (0.8,0.3) {};
	\node[vertex] (b1) at (-0.2,0) {};
	\node[vertex] (b2) at (0.2,0) {};
	\node[vertex] (c1) at (-0.8,-0.3) {};
	\node[vertex] (c2) at (0.8,-0.3) {};
	\draw[bedge] (r1)edge(a1);
    \draw[oedge] (r1)edge(b1);
    \draw[oedge] (r1)edge(a2);
    \draw[bedge] (r1)edge(b2);
    \draw[bedge] (r1)edge(c2);
	\draw[bedge] (r2)edge(a1);
    \draw[oedge] (r2)edge(b1);
    \draw[oedge] (r2)edge(b2);
	\draw[oedge] (a1)edge(b1);
    \draw[bedge] (a1)edge(c1);
    \draw[oedge] (b1)edge(c1);
	\draw[bedge] (a2)edge(b2);
    \draw[oedge] (a2)edge(c2);
\end{tikzpicture}\qquad\qquad
\begin{tikzpicture}[scale=2]          
	\node[vertex] (r1) at (0,0.5) {};
	\node[vertex] (r2) at (0,-0.5) {};
	\node[rvertex] (a1) at (-0.2,0) {};
	\node[vertex] (a2) at (0.8,0.3) {};
	\node[rvertex] (b1) at (-0.2,0) {};
	\node[vertex] (b2) at (0.2,0) {};
	\node[vertex] (c1) at (-0.8,-0.3) {};
	\node[vertex] (c2) at (0.8,-0.3) {};
    \draw[oedge] (r1)edge(b1);
    \draw[pedge] (r1)edge(c1);
    \draw[oedge] (r1)edge(a2);
    \draw[oedge] (r2)edge(b1);
    \draw[pedge] (r2)edge(c1);
    \draw[pedge] (r2)edge(a2);
    \draw[oedge] (r2)edge(b2);
    \draw[pedge] (r2)edge(c2);
    \draw[oedge] (b1)edge(c1);
    \draw[oedge] (a2)edge(c2);
    \draw[pedge] (b2)edge(c2);
\end{tikzpicture}\qquad\qquad
\begin{tikzpicture}[scale=2]          
	\node[vertex] (r1) at (0,0.5) {};
	\node[vertex] (r2) at (0,-0.5) {};
	\node[rvertex] (a1) at (-0.2,0) {};
	\node[vertex] (a2) at (0.8,0.3) {};
	\node[rvertex] (b1) at (-0.2,0) {};
	\node[vertex] (b2) at (0.2,0) {};
	\node[vertex] (c1) at (-0.8,-0.3) {};
	\node[vertex] (c2) at (0.8,-0.3) {};
	\draw[bedge] (r1)edge(a1);
    \draw[pedge] (r1)edge(c1);
    \draw[bedge] (r1)edge(b2);
    \draw[bedge] (r1)edge(c2);
	\draw[bedge] (r2)edge(a1);
    \draw[pedge] (r2)edge(c1);
    \draw[pedge] (r2)edge(a2);
    \draw[pedge] (r2)edge(c2);
    \draw[bedge] (a1)edge(c1);
	\draw[bedge] (a2)edge(b2);
    \draw[pedge] (b2)edge(c2);
\end{tikzpicture}
\end{center}
Again, we note here that each of these graphs are $(2,3)$-tight and hence minimally 2-rigid.

As all other edges of the double banana can be formed from one of the above two edges using an automorphism,
the graph satisfies the necessary conditions outlined in \Cref{mainthm}.
While the double banana is $(3,6)$-tight, it is not 3-rigid: for example, it has a 2-vertex separating set that can act like a hinge for one half of the graph to rotate around.
Hence, the converse of \Cref{mainthm} is false.

\section{Alternative proof of \texorpdfstring{\Cref{mainthm}}{Theorem 1.1}}

In this section we provide an alternative proof of \Cref{mainthm}. Before we prove \Cref{mainthm},
we require the following lemmas.
Our first lemma is the following folklore statement regarding Laplace expansions:

\begin{lemma}\label{lem:laplaceexpansion}
    Let $X$ be an $n \times n$ matrix and let $C \subset [n]$ be a set containing $k$ elements.
    Then, given $X_{A,B}$ is the submatrix of $X$ with rows $A$ and columns $B$, we have    
    \begin{equation*}
        \det X = \sum_{R \in \binom{[n]}{k}} (-1)^{\sum_{r \in R} r + \sum_{c \in C} c} \det X_{R,C} \det X_{R^c, C^c}.
    \end{equation*}
    Hence, if $X$ is invertible, there exists $R \in \binom{[n]}{k}$ such that both matrices $X_{R,C}$ and $X_{R^c, C^c}$ are invertible.
\end{lemma}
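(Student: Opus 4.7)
The identity in the lemma is the classical Laplace expansion of a determinant along a fixed set of columns; my plan is to derive it directly from the Leibniz formula for the determinant. I would begin by writing $\det X = \sum_{\sigma \in S_n} \operatorname{sgn}(\sigma) \prod_{i=1}^n X_{\sigma(i),i}$ and then group the permutations according to the image $R := \sigma(C) \in \binom{[n]}{k}$. For each fixed $R$, the restriction of $\sigma$ determines a pair of bijections $C \to R$ and $C^c \to R^c$, and the product $\prod_i X_{\sigma(i),i}$ correspondingly splits as a product over $i \in C$ times a product over $i \in C^c$, which are exactly the products that appear in the Leibniz expansions of the two submatrices $X_{R,C}$ and $X_{R^c, C^c}$.

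The heart of the proof is the sign bookkeeping. After listing each of the four sets $C, C^c, R, R^c$ in increasing order, every $\sigma$ with $\sigma(C) = R$ corresponds to a unique pair $(\alpha,\beta) \in S_k \times S_{n-k}$ describing the two bijections. I would factor $\operatorname{sgn}(\sigma) = \epsilon(R,C) \cdot \operatorname{sgn}(\alpha) \cdot \operatorname{sgn}(\beta)$, where $\epsilon(R,C)$ is the sign contribution of the ``block'' permutation obtained by taking $\alpha, \beta$ to be the identity. A short bubble-sort count on the row side gives a contribution of $(-1)^{\sum_{r \in R} r - \binom{k+1}{2}}$ and the analogous count on the column side gives $(-1)^{\sum_{c \in C} c - \binom{k+1}{2}}$; the two copies of $\binom{k+1}{2}$ cancel in the exponent, leaving $\epsilon(R,C) = (-1)^{\sum_{r \in R} r + \sum_{c \in C} c}$. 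Once these signs are pinned down, summing over $\alpha$ and $\beta$ inside each $R$-block reassembles $\det X_{R,C} \cdot \det X_{R^c, C^c}$, yielding the claimed identity.

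The second assertion then follows immediately: if $X$ is invertible then $\det X \neq 0$, so at least one summand of the expansion must be nonzero, which forces both $\det X_{R,C}$ and $\det X_{R^c, C^c}$ to be nonzero, and hence invertible, for the corresponding $R$. The only obstacle, and it is a minor one, is keeping the row and column sign conventions aligned so that the two contributions of $\binom{k+1}{2}$ genuinely cancel as described; everything else is routine.
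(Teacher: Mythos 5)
Your proof is correct. The paper states this lemma as a folklore fact about generalized Laplace expansion and provides no proof of its own, so there is nothing to compare against; your Leibniz-formula derivation with the block-permutation sign count (both $\binom{k+1}{2}$ terms cancelling to give $\epsilon(R,C)=(-1)^{\sum_{r\in R}r+\sum_{c\in C}c}$) is the standard argument and fills in precisely what the paper leaves implicit.
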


Each graph $G=(V,E)$ will now be assumed to come equipped with a total ordering $<$ for its vertices.
We additionally will assume that the columns of $R(G,p)$ are ordered such that $(v,i) < (w,j)$ if and only if $i < j$ or $i=j$ and $v < w$.
We will be more loose with our row ordering for $R(G,p)$ since we often wish to switch this throughout.

\begin{lemma}\label{lem1}
    Let $G=(V,E)$ be a minimally 3-rigid graph with $|V|\geq 3$.
    Let $T=(V,F)$ be a spanning tree of $G$ containing edge $e=xy$.
    Then there exists a partition $R_1,R_2$ of $E \setminus F$ with $|R_1|=|V|-2$ and $|R_2|=|V|-3$ such that the graphs
    \begin{equation*}
        (V,F \cup R_1), \qquad (V, R_2 \cup F)/e
    \end{equation*}
    are both minimally 2-rigid.
\end{lemma}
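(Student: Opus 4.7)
The plan is to deduce \Cref{lem1} by a Laplace expansion argument (using \Cref{lem:laplaceexpansion}) applied to a square invertible matrix derived from the rigidity matrix of $G$ at a generic realisation $p : V \to \mathbb{R}^3$. Since $G$ is minimally 3-rigid, $R(G,p)$ has full row rank $3|V|-6$, and I would augment it by six pinning rows chosen as three ``coordinate-3 pins'' at vertices $x$, $y$, and an auxiliary vertex $z \in V \setminus \{x,y\}$, together with three ``coordinate-$\{1,2\}$ pins'' at $x$ and $y$ that kill the residual trivial motions. For generic $p$ and appropriately chosen $z$, the resulting $3|V| \times 3|V|$ matrix $M$ is invertible.

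Before invoking Laplace I would row-reduce $M$ using the $F$-rows in order to zero out the coordinate-3 entries of every row indexed by $E \setminus F$. This is possible because the $F$-rows, restricted to the coordinate-3 columns, span a $(|V|-1)$-dimensional subspace (the tree $T$ is $(1,1)$-tight, hence 1-rigid in that direction) and together with the coordinate-3 pin rows span the entire coordinate-3 column space. The reduced matrix $M'$ has the same determinant as $M$. Now apply \Cref{lem:laplaceexpansion} to $M'$ with column set $C$ equal to the $|V|$ coordinate-3 columns. After the reduction, the only rows of $M'$ with any nonzero entry in $C$ are the $F$-rows and the three coordinate-3 pin rows — a total of $|V|+2$ candidate rows. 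Combined with the fact that the three coordinate-$\{1,2\}$ pin rows are zero on $C$ and must sit in $R^c$ (lest $M'_{R^c,C^c}$ acquire a zero row), any non-vanishing Laplace term has row index of the form $R = (F \setminus Z) \cup \{\text{coord-3 pins}\}$ with $Z \subseteq F$ and $|Z| = 2$.

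Choosing $z$ so that $e = xy$ lies on every $T$-path connecting $z$ to $x$ or $y$ (for instance, by picking $z$ in the component of $T \setminus e$ containing $y$), the invertibility of $M'_{R,C}$ — which by a block-triangular determinant computation amounts to $F \setminus Z$ being a spanning 3-forest with $x$, $y$, $z$ in distinct components — forces $Z = \{e, e'\}$ for some $e' \in F \setminus e$ on the unique $T$-path from $y$ to $z$. By invertibility of $M'$, at least one such $Z$ makes $M'_{R^c, C^c}$ invertible as well. The rows of $M'_{R^c, C^c}$ consist of $\phi_{12}(Z)$, the reduced 2D rows of the edges in $E \setminus F$, and the three coordinate-$\{1,2\}$ pin rows. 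Undoing the row reduction on this side using the $Z$-rows (the only $F$-rows available inside $R^c$) allows one to re-express $M'_{R^c, C^c}$ in terms of the ordinary 2D rigidity rows of $F \cup (E \setminus F)$, from which I would extract a partition $R_1 \sqcup R_2 = E \setminus F$ — with $R_2$ read off the reduced block structure associated to $e'$ and the $T \setminus Z$ forest, and $R_1$ its complement — simultaneously witnessing minimal 2-rigidity of $(V, F \cup R_1)$ and of $(V, F \cup R_2)/e$.

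The step I expect to be the main obstacle is precisely this final translation. The invertibility of $M'_{R,C}$ has a clean combinatorial interpretation in terms of a spanning 3-forest, but the invertibility of $M'_{R^c, C^c}$ intertwines information about the 2D rigidity of $F \cup R_1$ (on $V$) and of $(F \cup R_2)/e$ (on $V/e$) that must be decoupled. I would attempt this by a careful block analysis of $M'_{R^c, C^c}$ — isolating a $(2|V|-3) \times (2|V|-3)$ sub-block that corresponds (after undoing the reduction) to the pinned 2D rigidity matrix of $F \cup R_1$, and a complementary $(|V|-2) \times (|V|-2)$ contribution that matches the pinned 2D rigidity matrix of the contracted graph $(F \cup R_2)/e$ — likely invoking a secondary Laplace expansion or a matroid union argument to verify that the two blocks can be arranged to be simultaneously non-singular for the same choice of $R_1$ and $R_2$.
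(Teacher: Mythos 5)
Your overall plan — pass to a square invertible matrix, row-reduce using the tree $F$, and invoke \Cref{lem:laplaceexpansion} — is the same basic plan as the paper's proof, but the way you arrange the Laplace expansion does not actually produce the partition $R_1,R_2$ of $E\setminus F$, and this is where the argument breaks down.

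The issue is structural. You zero out the coordinate-$3$ block of the $E\setminus F$ rows and then apply Laplace to $M'$ with $C$ equal to the full set of $|V|$ coordinate-$3$ columns. With that choice, every row of $E\setminus F$ is identically zero on $C$, so \emph{all} of $E\setminus F$ is forced into $R^c$. The expansion therefore decides which two tree edges $Z\subseteq F$ land in $R^c$ — your spanning $3$-forest analysis is a correct read of $M'_{R,C}$ — but it says nothing about how to split the $2|V|-5$ rows of $E\setminus F$ into a set of size $|V|-2$ and one of size $|V|-3$. You recognise this in the last paragraph, but the proposed remedies (a ``secondary Laplace expansion'' or a matroid-union argument) are not carried out, and the ``undoing'' step you describe cannot work as stated: the row reduction that cleared coordinate~$3$ used essentially all of the $F$-rows and the coordinate-$3$ pins, almost none of which survive into $R^c$, so within the submatrix $M'_{R^c,C^c}$ there is no way to reverse it using only the two $Z$-rows.

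The paper sidesteps this by deleting six columns (rather than adding pinning rows), so that after deleting $(z,1)$ the coordinate-$1$ block of the $F$-rows, $X_{1,1}$, is a square $(|V|-1)\times(|V|-1)$ invertible reduced incidence matrix. Block-eliminating $X_{2,1}$ with $X_{1,1}^{-1}$ produces a $(2|V|-5)\times(2|V|-5)$ invertible Schur complement $Y$ whose rows are indexed \emph{exactly} by $E\setminus F$ and whose columns split into a coordinate-$2$ block of width $|V|-2$ and a coordinate-$3$ block of width $|V|-3$. Applying \Cref{lem:laplaceexpansion} to $Y$ then yields the desired partition $R_1,R_2$ directly in a single step, and the block-triangular structure lets one ``un-reduce'' to recover $A$ and $B$ because the entire invertible block $X_{1,1}$ is still present in the top-left. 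If you want to salvage your version, the natural fix is to first perform a tree-based block elimination that leaves a square matrix indexed only by $E\setminus F$ before invoking Laplace — at which point you would essentially have reconstructed the paper's proof.
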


\begin{proof}
    Choose a generic realisation $\tilde{p}$ of $G$ and pick any vertex $z \in V \setminus \{x,y\}$.
    From this framework, we form the framework $(G,p)$ by rotating and translating the framework $(G,\tilde{p})$ until $p(x)-p(y) = (\lambda,0,0)$ for some $\lambda \neq 0$ and $p(z) = (0,0,0)$. 
    Order the rows of $R(G,p)$ so that the rows corresponding to edges in $F$ are at the top.
    We now form the square matrix $X$ from $R(G,p)$ by deleting the columns labelled
    \begin{equation*}
        (x,3), \qquad (y,2), \qquad (y,3), \qquad (z,1), \qquad (z,2), \qquad (z,3).
    \end{equation*}

    \begin{claim}\label{claim1}
        The matrix $X$ is invertible.
    \end{claim}

    \begin{proof}
        Let $u = (u_1(v),u_2(v),u_3(v))_{v \in V} \in \mathbb{R}^{3|V|}$ be a vector in the kernel of $R(G,p)$ satisfying the following equalities:
        \begin{equation*}
            u_3(x)=0, \qquad u_2(y) = 0, \qquad u_3(y)=0, \qquad u(z) = (0,0,0).
        \end{equation*}
        Since $(G,p)$ is infinitesimally rigid,
        there exists a skew-symmetric matrix $A \in \mathbb{R}^{3 \times 3}$ such that $u(v) = Ap(v)$ for each $v \in V$;
        see \cite[\S 2.3]{GraverServatius} for more details.
        By genericity, the points $p(x),p(y)$ are linearly independent.
        Since $Ap(x) = Ap(y) = (0,0,0)$,
        the rank of $A$ is either 0 or 1.
        As all skew-symmetric matrices have even rank (see, for example, \cite[\S 10.3, Thm 6]{linalg}),
        $A$ is the all-zeroes matrix.
        Thus $u$ is all the all-zeroes vector for $\mathbb{R}^{3|V|}$.
        From this it follows that 
        \begin{equation*}
            \rank X = \rank R(G,p) = 3|V|-6.
        \end{equation*}
        The result now holds since $X$ has exactly $3|V|-6$ rows.
    \end{proof}
    
    We can now represent the matrix $X$ as follows:
    \begin{equation*}
        X = 
        \begin{pmatrix}
            X_{1,1} & X_{1,2} & X_{1,3} \\
            X_{2,1} & X_{2,2} & X_{2,3}
        \end{pmatrix},
    \end{equation*}
    where each matrix $X_{1,i}$ has rows labelled by $F$ and columns labelled by $(v,i)$ for suitable $v$,
    and each matrix $X_{2,i}$ has rows labelled by $E \setminus F$ and columns labelled by $(v,i)$ for suitable $v$.
    We observe here that each matrix $X_{i,j}$ has $|V|-j$ columns.
    Moreover, since $p$ is generic and $F$ is a tree, the matrix $X_{1,1}$ is invertible.
    
    We can now perform a block row operation to cancel out the bottom left matrix:
    specifically, we subtract $X_{2,1} X_{1,1}^{-1}$ times row 1 from row 2 to get the matrix
    \begin{equation*}
        X' := 
        \begin{pmatrix}
            X_{1,1} & X_{1,2} & X_{1,3} \\
            \mathbf{0}_{|E\setminus F| \times |V|-1} & X_{2,2} - X_{2,1} X_{1,1}^{-1} X_{1,2} & X_{2,3}  - X_{2,1} X_{1,1}^{-1} X_{1,3}
        \end{pmatrix}.
    \end{equation*}
    Now set
    \begin{equation*}
        Y = 
        \begin{pmatrix}
            Y_1 & Y_2
        \end{pmatrix}
        :=
        \begin{pmatrix}
            X_{2,2} - X_{2,1} X_{1,1}^{-1} X_{1,2} & X_{2,3}  - X_{2,1} X_{1,1}^{-1} X_{1,3}
        \end{pmatrix}.
    \end{equation*}
    Since both $X$ and $X_{1,1}$ are invertible,
    the matrix $Y$ is invertible.

    By \Cref{lem:laplaceexpansion}, there exists a partition $R_1,R_2$ of $E \setminus F$ with $|R_1|=|V|-2$ and $|R_2|=|V|-3$ such that the matrices $Y_1|_{R_1}$ and $Y_2|_{R_2}$ (with the subscript representing the rows each matrix $Y_i$ is restricted to) are invertible.
    Observe here that $Y_i|_{R_i} = X_{2,i+1}|_{R_i} - X_{2,1}|_{R_1} X_{1,1}^{-1} X_{1,i+1}$ for each $i \in \{1,2\}$, and so the matrices
    \begin{equation*}
        \begin{pmatrix}
            X_{1,1} & X_{1,2} \\
            \mathbf{0}_{|R_1| \times |V|-1} & X_{2,2}|_{R_1} - X_{2,1}|_{R_1} X_{1,1}^{-1} X_{1,2}
        \end{pmatrix},
        \qquad 
        \begin{pmatrix}
            X_{1,1} & X_{1,3} \\
            \mathbf{0}_{|R_1| \times |V|-1} & X_{2,3}|_{R_2} - X_{2,1}|_{R_2} X_{1,1}^{-1} X_{1,3}
        \end{pmatrix}
    \end{equation*}
    are invertible.
    Hence, the matrices
    \begin{equation*}
        A =
        \begin{pmatrix}
            X_{1,1} & X_{1,2} \\
            X_{2,1}|_{R_1} & X_{2,2}|_{R_1}
        \end{pmatrix},
        \qquad 
        B =
        \begin{pmatrix}
            X_{1,1} & X_{1,3} \\
            X_{2,1}|_{R_2} & X_{2,3}|_{R_2}
        \end{pmatrix}
    \end{equation*}
    are invertible.

    Set $G_{1} := (V,F \cup R_1)$.
    Fix $q :V \rightarrow \mathbb{R}^2$ to be the realisation of $G_1$ where $q(v) = (p_1(v),p_{2}(v))$ for each $v \in V$.
    We immediately note that $A$ is the matrix formed from $R(G_{1},q)$ by deleting the columns $(y,2),(z,1),(z,2)$.
    Since $A$ is invertible,
    it follows that $\rank R(G_1,q) = 2|V|-3$, and hence $G_1$ is minimally 2-rigid.
    
    Now set $G_2 := (V, F \cup R_2) /e$;
    here we assume the vertex set $G_2$ is $V \setminus x$.
    Before we try the same trick for $B$, we need to slightly adjust it.
    First, add column $(x,1)$ to column $(y,1)$ to form a new invertible matrix $\tilde{B}$.
    With this, the row corresponding to the edge $e$ now has a single non-zero entry in column $(x,1)$ (namely $\lambda$).
    Hence, the matrix $B'$ formed by deleting row $e$ and column $(x,1)$ is also invertible.
    With this, the left-hand side of $B'$ now is a row-scaled copy of the directed incidence matrix of $G_2$ with the column for $z$ deleted,
    and the right-hand side of $B'$ now is a row-scaled copy of the directed incidence matrix of $G_2$ with the columns for $y$ and $z$ deleted.
    
    Now fix $r :V \setminus x \rightarrow \mathbb{R}^2$ to be the realisation of $G_2$ where $r(v) = (p_1(v),p_{3}(v))$ for each $v \in V \setminus x$.
    We immediately note that $B'$ is the matrix formed from $R(G_{2},q)$ by deleting the columns $(y,2),(z,1),(z,2)$.
    Since $B'$ is invertible,
    it follows that $\rank R(G_2,q) = 2|V \setminus x|-3$, and hence $G_2$ is also minimally 2-rigid.
\end{proof}

\begin{lemma}\label{lem2}
    Let $G=(V,E)$ be a minimally 3-rigid graph with edge $e=xy$.
    Then there exists a spanning tree $T=(V,F)$ such that $e \in F$ and the graph 
    \begin{equation*}
        (V, (E \setminus F) \cup e)/e
    \end{equation*}
    is minimally 2-rigid.
\end{lemma}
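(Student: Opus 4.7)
The approach would mirror Lemma 4.2, with two key modifications. First, I would normalise a generic realisation $\tilde p$ of $G$ by a rigid motion so that $p(z) = (0,0,0)$ for some $z \in V \setminus \{x,y\}$, $p(x) - p(y) = (\lambda, 0, 0)$ with $\lambda \neq 0$, and, using the remaining rotational freedom about the first axis, also $p_3(x) = p_3(y) = 0$. Second, before the column deletions of Lemma 4.2, I would apply the column operations on $R(G,p)$ that add column $(y,i)$ to column $(x,i)$ for each $i \in \{1,2,3\}$, i.e.\ the algebraic analogue of contracting $e$ into the vertex $x$. Let $X$ denote the square $(3|V|-6) \times (3|V|-6)$ matrix obtained by performing these column operations and then deleting the six columns $(x,3), (y,2), (y,3), (z,1), (z,2), (z,3)$. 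The analogue of Claim 1 of Lemma 4.2 still goes through: a kernel vector of $X$ lifts to an infinitesimal motion $u = Ap$ of $(G,p)$ (with $A \in \RR^{3\times 3}$ skew-symmetric) satisfying $u(z) = 0$, $u_3(x) = u_3(y) = 0$ and, coming from the $i = 2$ column operation, the modified constraint $u_2(x) = u_2(y)$; together with $p_3(x) = 0$ and generic $p_2(x) \neq 0$, these force $A = 0$.

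The crucial consequence of the column operations is that row $e$ of $X$ has a single nonzero entry, namely $-\lambda$ in column $(y,1)$: the old $(x,1)$ and $(y,1)$ entries of row $e$ cancel in the modified $(x,1)$ column, while the $(x,2)$ and other entries vanish because $p(x) - p(y) = (\lambda,0,0)$. Writing $C_1 := \{(v,1) : v \neq z\}$, $C_2 := \{(v,2) : v \notin \{y,z\}\}$ and $C_3 := \{(v,3) : v \notin \{x,y,z\}\}$ for the three column blocks of $X$, I would apply \Cref{lem:laplaceexpansion} to $X$ with the column set $C_2 \cup C_3$, which has size $2|V|-5$. This yields a partition $E = F \sqcup R$ with $|F| = |V|-1$ and $|R| = 2|V|-5$ such that both $X_{F, C_1}$ and $X_{R, C_2 \cup C_3}$ are invertible. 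Since the only nonzero entry of row $e$ lies in $C_1$, choosing $e \in R$ would leave a zero row inside $X_{R, C_2 \cup C_3}$, so one must have $e \in F$.

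Interpreting the two invertibility conclusions now completes the argument. A further Laplace step along the single-entry row $e$ of $X_{F, C_1}$ reduces it, up to nonzero row scalings $p_1(u) - p_1(w)$, to the signed incidence matrix of the contracted forest $F/e$ on the vertex set $V/e$ with column $z$ removed, whose invertibility is equivalent to $F$ being a spanning tree of $G$ containing $e$. For the other block, the column operations are precisely what arranges $X_{R, C_2 \cup C_3}$ to coincide with the 2-dimensional rigidity matrix of $(V, R \cup e)/e$ at the realisation $q : V/e \to \RR^2$ given by $q(v) := (p_2(v), p_3(v))$, with the three columns $(z,1), (z,2), (x,2)$ deleted to pin the 2D rigid motions (using $q(z) = 0$ and $q_2(x) = p_3(x) = 0$); its invertibility therefore gives minimal 2-rigidity of $(V, (E \setminus F) \cup e)/e$. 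The main technical obstacle is this last identification of matrices: the column operations are essential to ensure that the rows indexed by edges $yw \in R$ (which become edges of the merged vertex in $G/e$) contribute to the column $(x,2)$ exactly as they must in the 2-dimensional rigidity matrix of the contracted framework — without the column operations, these rows would contain a spurious zero in that column.
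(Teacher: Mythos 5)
Your proposal is correct but takes a genuinely different route from the paper. The paper leaves the columns of $R(G,p)$ untouched, makes the matrix square by appending an artificial row $\tilde{e}$ (a single $1$ in column $(y,2)$), and applies \Cref{lem:laplaceexpansion} with the middle column block against $C_1 \cup C_3$; only at the very end, on the complementary block, does it perform the contraction column operation (adding column $(x,1)$ to $(y,1)$). You instead perform the contraction column operations on all three coordinate blocks up front, use one further normalisation ($p_3(x)=0$, which together with $p(x)-p(y)=(\lambda,0,0)$ places $e$ in the plane $p_3=0$) so that row $e$ of $X$ has a single nonzero entry $-\lambda$ inside $C_1$, and then apply Laplace with $C_1$ against $C_2\cup C_3$. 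Both column partitions are legitimate, but yours buys a cleaner tree argument: expanding $X_{F,C_1}$ along the single-entry row $e$ leaves, up to nonzero row scalings, exactly the $z$-deleted incidence matrix of $F/e$ on $V/e$, so invertibility together with $e\in F$ (forced because row $e$ is supported in $C_1$) immediately gives that $F$ is a spanning tree containing $e$. In the paper's version the artificial row $\tilde{e}$ acts inside $Z_2|_S$ like the incidence row of a phantom edge $yz$ rather than of $e=xy$, so the stated identification of $Z_2|_S$ with a scaled incidence matrix of $(V,(S\setminus\tilde{e})\cup e)$, and hence the conclusion that this graph is a tree, is not as immediate as written; your up-front column operations sidestep this point entirely. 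The remaining steps are parallel. One small remark: the extra normalisation $p_3(x)=0$ is convenient for interpreting the deleted columns $(z,1),(z,2),(x,2)$ as a genuine pinning in $q$-coordinates, but it is not needed for the matrix identification of $X_{R,C_2\cup C_3}$ with the truncated rigidity matrix of $(V,R\cup e)/e$ (that only uses $p_2(x)=p_2(y)$ and $p_3(x)=p_3(y)$, which already follow from $p(x)-p(y)=(\lambda,0,0)$), nor is pinning validity required — an invertible square submatrix of $R(H,q)$ of the right size already forces $\rank R(H,q)=2|V(H)|-3$, and since $H$ has exactly that many edges, minimal $2$-rigidity follows.
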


\begin{proof}
    Choose a generic realisation $\tilde{p}$ of $G$ and pick any vertex $z \in V \setminus \{x,y\}$.
    From this framework, we form the framework $(G,p)$ by rotating and translating the framework $(G,\tilde{p})$ until $p(x)-p(y) = (\lambda,0,0)$ for some $\lambda \neq 0$ and $p(z) = (0,0,0)$. 
    Order the rows of $R(G,p)$ so that the row corresponding to the edge $e$ is at the top.

    Pick any vertex $z \in V \setminus \{x,y\}$.
    We now form the square matrix $Z$ from $R(G,p)$ by deleting the columns labelled
    \begin{equation*}
        (x,3), \qquad (y,3), \qquad (z,1), \qquad (z,2), \qquad (z,3)
    \end{equation*}
    and adding a row (now labelled $\tilde{e}$ and considered to be a parallel edge to $e$) with value 1 in the column $(y,2)$ and value 0 elsewhere.
    Given $X$ is the matrix described in \Cref{lem1},
    we observe that $|\det Z| = |\det X|$.
    Hence, the matrix $Z$ is invertible by \Cref{claim1}.

    Consider the block representation $Z = (Z_1 ~ Z_2 ~ Z_3)$ where each matrix $Z_i$ has all columns with index $(v,i)$ for some vertex $v$.
    By \Cref{lem:laplaceexpansion},
    there exists $|V|-1$ rows $S \subset E \cup \tilde{e}$ so that the matrices 
    \begin{equation*}
        B=
        \begin{pmatrix}
            Z_1|_{E \setminus S} & Z_3|_{E \setminus S}
        \end{pmatrix},
        \qquad Z_2|_S
    \end{equation*}
    are both invertible.
    Since the row for $e$ has all zeroes in $Z_2$,
    we have that $e \notin S$.
    Similarly, since the row for $\tilde{e}$ has all zeroes in $Z_1,Z_3$, we have that $\tilde{e} \in S$.
    Now set $F = (S \setminus \tilde{e}) \cup e$.
    The matrix $X_2|_S$ is formed from taking a copy of the directed incidence matrix of the graph $(V,F)$, scaling each row by a non-zero value (with non-zero following from $p$ being constructed carefully from a generic framework) and deleting the column for the vertex $z$.
    Since $X_2|_S$ is invertible, it now follows that $(V,F)$ is a tree.

    We now proceed with a similar trick to that described in the proof of \Cref{lem1}.
    Fix $H = (V, (E \setminus F) \cup e) /e$;
    here we assume the vertex set $H$ is $V \setminus x$.
    First, add column $(x,1)$ to column $(y,1)$ to form a new invertible matrix $\tilde{B}$.
    With this, the row corresponding to the edge $e$ now has a single non-zero entry in column $(x,1)$ (namely $\lambda$).
    Hence, the matrix $B'$ formed by deleting row $e$ and column $(x,1)$ is also invertible.
    With this, the left-hand side of $B'$ now is a row-scaled copy of the directed incidence matrix of $H$ with the column for $z$ deleted,
    and the right-hand side of $B'$ now is a row-scaled copy of the directed incidence matrix of $H$ with the columns for $y$ and $z$ deleted.
    
    Now fix $r :V \setminus x \rightarrow \mathbb{R}^2$ to be the realisation of $H$ where $r(v) = (p_1(v),p_{3}(v))$ for each $v \in V \setminus x$.
    We immediately note that $B'$ is the matrix formed from $R(H,q)$ by deleting the columns $(y,2),(z,1),(z,2)$.
    Since $B'$ is invertible,
    it follows that $\rank R(H,q) = 2|V \setminus x|-3$, and hence $H$ is minimally 2-rigid.
\end{proof}

With this, we can now prove \Cref{mainthm}:

\begin{proof}[Proof of \Cref{mainthm}]
    Set $T=(V,F)$ to be the spanning tree given by \Cref{lem2} for $G$ and $e$,
    and set $R_1,R_2$ to be the partition of $F \setminus E$ given by \Cref{lem1} for $G$, $T$ and $e$.
    The result now follows by setting $S_1 = F$, $S_2=R_1$ and $S_3=R_2$.
\end{proof}

\section{Final thoughts}

In \cite{baranyai2026genericrigiditygraphs},
Baranyai additionally claims to prove a combinatorial characterisation of minimal $d$-rigidity; see \cite[Theorem 2]{baranyai2026genericrigiditygraphs}.
The author believes that the counter-example presented in \Cref{sec:counter} can be extended to a counter-example for the higher-dimensions by \emph{coning} the graph;
i.e., adding a vertex adjacent to all other vertices.
Here we recall that the coning of a graph is (minimally) $(d+1)$-rigid if and only if the original graph is (minimally) $d$-rigid;
see, for example, \cite{Whiteley1983}.

In the author's opinion, the counter-example presented in \Cref{sec:counter} motivates the following conjecture.

\begin{conjecture}\label{mainconj}
    A graph $G=(V,E)$ with at least 4 vertices is $(3,6)$-tight if and only if for any edge $e  \in E$, there exists a partition $(S_1,S_2,S_3)$ of $E$ such that the following holds:
    \begin{enumerate}
        \item $|S_i|=|V|-i$ for each $i \in \{1,2,3\}$.
        \item $e \in S_1$.
        \item Each of the graphs
        \begin{equation*}
            (V, S_1 \cup S_2) , \qquad (V, S_1 \cup S_3) /e, \qquad (V, S_2 \cup S_3 \cup e)/e
        \end{equation*}
        is $(2,3)$-tight.
    \end{enumerate}
\end{conjecture}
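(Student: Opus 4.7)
The ``if'' direction reduces to a short counting argument. The partition sizes immediately yield $|E|=(|V|-1)+(|V|-2)+(|V|-3)=3|V|-6$. For sparsity, given any subgraph $G'=(V',E')$ with $|V'|\geq 3$ and $E'\neq\emptyset$, I would pick any edge $e'\in E'$, apply the given partition $(S_1,S_2,S_3)$ for $e'$, and set $E'_i:=E'\cap S_i$. Restricting the three $(2,3)$-tight graphs to the relevant subgraphs on $V'$ (with the appropriate vertex identification in the two contracted cases) and applying $(2,3)$-sparsity yields
\begin{equation*}
    |E'_1|+|E'_2|\leq 2|V'|-3, \qquad |E'_1|+|E'_3|\leq 2|V'|-4, \qquad |E'_2|+|E'_3|\leq 2|V'|-5.
\end{equation*}
Summing these three inequalities and dividing by $2$ gives $|E'|\leq 3|V'|-6$, establishing $(3,6)$-sparsity.

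For the converse direction, my plan is to mimic the argument of \Cref{mainthm}, replacing the rigidity matrix $R(G,p)$ (which has full row rank only when $G$ is 3-rigid) by a linear representation of the $(3,6)$-count matroid. Concretely, I would work with a $|E|\times 3|V|$ matrix $M$ whose rows are indexed by $E$, whose columns are grouped into three ``coordinate'' blocks of size $|V|$, whose row supports mirror the incidences of $G$, and which has rank $3|V|-6$ on the $(3,6)$-tight graph $G$. Given such an $M$, I would run the proofs of \Cref{lem1} and \Cref{lem2} essentially unchanged: delete six ``pinning'' columns to form a square matrix $X$, invoke \Cref{lem:laplaceexpansion} to partition rows into two invertible blocks, and perform block row cancellations to identify a spanning tree $T\ni e$ and a bipartition $R_1,R_2$ of $E\setminus F$. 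Setting $S_1=F$, $S_2=R_1$, $S_3=R_2$ would then yield the required partition as in the proof of \Cref{mainthm}.

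The main obstacle is reproducing \Cref{claim1}, which asserts that the pinned square submatrix is invertible. Dewar's argument uses that the kernel of $R(G,p)$ is spanned by infinitesimal rigid motions, parametrised by skew-symmetric matrices whose rank is forced to be even; this parity argument rules out the remaining rank-$1$ contribution after pinning. A generic matrix representation of the $(3,6)$-count matroid does not come equipped with this Euclidean symmetry, so a replacement mechanism is needed --- perhaps by taking $M$ to be the rigidity matrix of a carefully chosen cone (or some other auxiliary framework) whose trivial motions can be tracked combinatorially, or by an explicit factorisation of $\det M$ that isolates a combinatorial factor. A fallback approach is inductive: find a reduction operation (vertex deletion, edge contraction, X-replacement, etc.) preserving $(3,6)$-tightness that lifts the partition from a smaller graph to $G$. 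Since no complete inductive generation scheme for the class of $(3,6)$-tight graphs is currently known, this fallback would at best handle restricted subclasses rather than settle the conjecture in full generality.
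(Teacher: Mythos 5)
This statement is a \emph{conjecture}, not a theorem: the paper states \Cref{mainconj} without proof, so there is no argument of the author's to compare yours against, and the hard direction remains open. You have, however, correctly separated the two directions and been candid about which one you cannot close.

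Your ``if'' direction (partition conditions $\Rightarrow$ $(3,6)$-tight) is correct and, as far as I can tell, complete. The edge count $|E|=(|V|-1)+(|V|-2)+(|V|-3)=3|V|-6$ is immediate. For sparsity, the key move is to pick $e'\in E'$ and invoke the hypothesis \emph{for that edge}, so that $e'\in S_1$; then the three restrictions (on $V'$ for $(V,S_1\cup S_2)$, and on $V'/e'$, which has $|V'|-1\geq 2$ vertices, for the two contracted graphs) give precisely your three inequalities, and summing and halving yields $|E'|\leq 3|V'|-6$. One small point worth making explicit: $(2,3)$-tightness of the contracted graphs automatically forces them to be simple, so no edges are lost to parallel-edge identification when passing to $V'/e'$ and the count $|E'_1|+|E'_3|-1$ (resp.\ $|E'_2|+|E'_3|$) for the contracted subgraph is exact. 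This direction is a modest but genuine observation that the paper does not record.

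For the ``only if'' direction, you have correctly diagnosed why a naive transplant of the paper's proof of \Cref{mainthm} fails. \Cref{claim1} is not a generic linear-algebra fact about a rank-$(3|V|-6)$ matrix with the right support pattern: it uses that the kernel of $R(G,p)$ consists of infinitesimal rigid motions, hence after pinning the six chosen coordinates one is left with $u=Ap$ for a skew-symmetric $A$ of rank at most $1$, and skew-symmetric matrices have even rank. A generic representation of the $(3,6)$-count matroid carries no such symmetry, so one cannot conclude the pinned square block is invertible, and both \Cref{lem1} and \Cref{lem2} break at this point. Your fallback (an inductive generation scheme for $(3,6)$-tight graphs) is likewise blocked: no such scheme is known, and finding one is itself a well-known open problem. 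So the ``only if'' direction stands exactly where the paper leaves it, as a conjecture; your proposal identifies the right obstruction but does not resolve it.
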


\subsection*{Acknowledgements}

The author would like to thank Matteo Gallet and Oliver Clarke for their discussion of the paper \cite{baranyai2026genericrigiditygraphs}.

S.\,D.\ was supported by the FWO grants G0F5921N (Odysseus) and G023721N, and by the KU Leuven grant iBOF/23/064.

\bibliographystyle{plainurl}
\bibliography{ref}
\end{document}